\newtheorem{theorem}{Theorem}[section]
\newtheorem{lemma}[theorem]{Lemma}
\newtheorem{cor}[theorem]{Corollary}
\numberwithin{equation}{section}
\begin{document}

\title{Monotonicity of Zeros of Jacobi-Angelesco polynomials}

\author{Eliel J. C. dos Santos}
\address{IMECC, Universidade Estadual de Campinas,
Campinas-SP, Brasil}
\email{elielubarana@gmail.com}
\thanks{Research supported by the Brazilian Science Foundation CAPES}

\subjclass[2010]{Primary 33C45, 26C10}



\keywords{Jacobi-Angelesco polynomials, zeros, monotonicity}

\begin{abstract}
We study the monotonic behaviour of the zeros of the multiple Jacobi-Angelesco orthogonal polynomials, in the diagonal case,  with respect to the parameters $\alpha,\beta$ and $\gamma$.
We prove that the zeros are monotonic functions of $\alpha$ and $\gamma$ and consider some special cases of how the zeros depend on $\beta$, especially in the presence of symmetry.
As a consequence we obtain results about monotonicity of zeros of Jacobi-Laguerre and Laguerre-Hermite multiple orthogonal polynomials too.
\end{abstract}

\maketitle

\section{Introduction}
There are two types of multiple orthogonal polynomials:
\begin{itemize}
\item[Type I:]
For a multi-index $\overrightarrow{n}=(n_1,\ldots,n_r)$ the vector $\left(A_{\overrightarrow{n},1}(x),\ldots,A_{\overrightarrow{n},r}(x)\right)$ of $r$ polynomials, where $A_{\overrightarrow{n},j}(x)$ has degree at most $n_j-1$,
is called a type I system of multiple orthogonal polynomials if
\begin{eqnarray}\label{def_tipoI}
\sum_{j=1}^{r}\int x^k A_{\overrightarrow{n},j}(x)d\mu_j(x)=0,\quad k=0,1,\ldots,|\overrightarrow{n}|-2,
\end{eqnarray}
and
$$\sum_{j=1}^{r}\int x^{|\overrightarrow{n}|-1} A_{\overrightarrow{n},j}(x)d\mu_j(x)=1.$$
\item[Type II:] A monic polynomial $P_{\overrightarrow{n}}(x)$ of degree $|\overrightarrow{n}|$ is said to be a type II multiple orthogonal polynomial if
\begin{eqnarray}\label{def_TipoII}
\int x^k P_{\overrightarrow{n}}(x)d\mu_j(x)=0\ \  \mathrm{for}\ \ k=0,1,\ldots,n_j-1\ \  \mathrm{and}\ \ j=1,\ldots,r.
\end{eqnarray}
\end{itemize}
An Angelesco system is a set of $r$ different measures $\left(\mu_1,\ldots,\mu_r\right)$ such that the convex hull of the support of each measure $d\mu_i$ is an interval $[a_i,b_i]$ and the open intervals $(a_i,b_i)$ are disjoint.
The following fundamental result holds (see  \cite[pag. 609]{Mourad}):
\begin{theorem}\label{loc_zeros_angelesco}
Let $P_{\overrightarrow{n}}$ be a multiple orthogonal polynomial polynomial of type II with   $\overrightarrow{n}=(n_1,\ldots,n_r)$ and a the corresponding measures $(\mu_1,\ldots,\mu_r)$.
Suppose that the support of each measure $\mu_i$ has infinite points. Then $P_{\overrightarrow{n}}$ has $n_i$ zeros in each interval $(a_i,b_i)$, for $i=1,\ldots,r$.
\end{theorem}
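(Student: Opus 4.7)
The plan is to run, for each fixed $i$, the standard orthogonality-against-test-polynomial argument that is used for ordinary orthogonal polynomials, and then exploit the disjointness condition that defines an Angelesco system to glue the individual counts together.

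Fix an index $i \in \{1,\ldots,r\}$, and let $m_i$ denote the number of points of $(a_i,b_i)$ at which $P_{\overrightarrow{n}}$ changes sign. I want to show $m_i \ge n_i$. Assuming toward a contradiction that $m_i \le n_i - 1$, let $x_1,\ldots,x_{m_i}$ be the sign changes inside $(a_i,b_i)$ and set
\[
q(x) = \prod_{k=1}^{m_i}(x - x_k),
\]
with the convention that $q \equiv 1$ if $m_i = 0$. Then $q(x)P_{\overrightarrow{n}}(x)$ has constant sign on $(a_i,b_i)$ (and hence on the support of $\mu_i$, whose convex hull equals $[a_i,b_i]$). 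Since $\mu_i$ has infinitely many points in its support, the integral $\int q(x)P_{\overrightarrow{n}}(x)\,d\mu_i(x)$ is strictly nonzero. On the other hand, $\deg q = m_i \le n_i - 1$, so by the type II orthogonality relation (\ref{def_TipoII}) applied to $\mu_i$ and to each monomial appearing in $q$, the integral must vanish. This contradiction gives $m_i \ge n_i$.

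Now I would assemble the global count. Because the open intervals $(a_j,b_j)$ are pairwise disjoint by the Angelesco hypothesis, the sign-change points found inside distinct $(a_j,b_j)$'s are distinct, so $P_{\overrightarrow{n}}$ has at least $\sum_{j=1}^r m_j \ge \sum_{j=1}^r n_j = |\overrightarrow{n}|$ sign changes in $\bigcup_j (a_j,b_j)$. But $\deg P_{\overrightarrow{n}} = |\overrightarrow{n}|$, so this is also an upper bound on the total number of real zeros. Thus every inequality $m_j \ge n_j$ must be an equality, each sign change is a simple zero, and $P_{\overrightarrow{n}}$ has no other zeros.

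I do not foresee a real obstacle; the only subtlety is making sure that the orthogonality with respect to $\mu_i$ alone (as opposed to a combination of the $\mu_j$'s, which would be the type I situation) is enough to force the test-polynomial integral to vanish. That is exactly what definition (\ref{def_TipoII}) supplies, and the disjointness of the intervals is the clean combinatorial input that turns $r$ local lower bounds into a matching global upper bound via the degree of $P_{\overrightarrow{n}}$.
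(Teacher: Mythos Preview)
Your argument is correct and is precisely the standard proof of this classical fact about Angelesco systems. Note, however, that the paper does not supply its own proof of Theorem~\ref{loc_zeros_angelesco}; it merely states the result with a reference to \cite[p.~609]{Mourad}, so there is no in-paper argument to compare against. The proof you wrote is essentially the one found in that reference (and in most treatments of Angelesco systems): for each $i$ the type~II orthogonality~\eqref{def_TipoII} against test polynomials of degree at most $n_i-1$ forces at least $n_i$ sign changes in $(a_i,b_i)$, and the disjointness of the intervals together with $\deg P_{\overrightarrow{n}}=|\overrightarrow{n}|$ turns these lower bounds into equalities with all zeros simple.
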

According with this theorem, $P_{\overrightarrow{n}}$ can be represented in the form
$$
P_{\overrightarrow{n}}(x)=\prod_{i=1}^{r} q_{\overrightarrow{n},i}(x),
$$
where $q_{\overrightarrow{n},i}$ are polynomials with degree $n_i$ and their zeros belong to the respective  intervals of orthogonality $(a_i,b_i).$

In this paper we are interested in the behaviour of the zeros of the multiple Jacobi-Angelesco, Jacobi-Laguerre and Laguerre-Hermite orthogonal polynomials and first we recall
their definitions (see \cite{Walter}).
The Jacobi-Angelesco polynomials, denoted by $P_{n,m}^{(\alpha,\beta,\gamma)}(x;a),$ are multiple orthogonal polynomials, with respect to the weight functions  $\omega_1(x)=(x-a)^{\alpha}|x|^{\beta}(1-x)^{\gamma}$ in the interval $[a,0]$ and $\omega_2(x)=(x-a)^{\alpha}x^{\beta}(1-x)^{\gamma}$ in the interval $[0,1],$ with $a<0$ and $\alpha,\beta,\gamma>-1.$ In other words,
\begin{equation}
\int_{a}^{0}x^kP_{n,m}^{(\alpha,\beta,\gamma)}(x;a)(x-a)^{\alpha}|x|^{\beta}(1-x)^{\gamma}dx=0\quad k=0,\ldots,n-1,
\end{equation}
and
\begin{equation}
\int_{0}^{1}x^kP_{n,m}^{(\alpha,\beta,\gamma)}(x;a)(x-a)^{\alpha}x^{\beta}(1-x)^{\gamma}dx=0\quad k=0,\ldots,m-1.
\end{equation}

The Jacobi-Laguerre polynomials,  denoted by $L_{n,m}^{(\alpha,\beta)}(x;a),$ are multiple orthogonal polynomials, with respect to
$\omega_1(x)=(x-a)^{\alpha}|x|^{\beta}e^{-x}$ in $[a,0]$ and $\omega_2(x)=(x-a)^{\alpha}x^{\beta}e^{-x}$ in $[0,+\infty)$,
where $a<0$ and $\alpha,\beta>-1.$ This means that
\begin{equation}
\int_{a}^{0}x^kL_{n,m}^{(\alpha,\beta)}(x;a)(x-a)^{\alpha}|x|^{\beta}e^{-x}dx=0\quad k=0,\ldots,n-1
\end{equation}
and
\begin{equation}
\int_{0}^{+\infty}x^kL_{n,m}^{(\alpha,\beta)}(x;a)(x-a)^{\alpha}x^{\beta}e^{-x}dx=0\quad k=0,\ldots,m-1.
\end{equation}

The third family of multiple orthogonal polynomials are the  Laguerre-Hermite ones, denoted by $H_{n,m}^{(\beta)}(x),$ and orthogonal with respect to
the weight functions  $\omega_1(x)=|x|^{\beta}e^{-x^2}$ in the interval $(-\infty,0]$ and $\omega_2(x)=x^{\beta}e^{-x^2}$ in the interval $[0,+\infty),$ with $\beta>-1.$
This is equivalent to
\begin{equation}
\int_{a}^{0}x^kH_{n,m}^{(\beta)}(x)|x|^{\beta}e^{-x^2}dx=0\quad k=0,\ldots,n-1,
\end{equation}
and
\begin{equation}
\int_{0}^{1}x^kH_{n,m}^{(\beta)}(x)x^{\beta}e^{-x^2}dx=0\quad k=0,\ldots,m-1.
\end{equation}
These multiple orthogonal polynomials are connected by the following asymptotic relations:
\begin{equation}\label{laguerre-angelesco}
    L_{n,m}^{(\alpha,\beta)}(x,a)=\lim_{\gamma\rightarrow\infty}\gamma^{n+m}P_{n,m}^{(\alpha,\beta,\gamma)}(x/\gamma;a/\gamma)
\end{equation}
and
\begin{equation}\label{hermite-angelesco}
    H_{n,m}^{(\beta)}(x)=\lim_{\alpha\rightarrow\infty}\sqrt{\alpha}^{n+m}P_{n,m}^{(\alpha,\beta,\alpha)}(x/\sqrt{\alpha};-1).
\end{equation}

\section{Preliminary results}
The Jacobi-Angelesco polynomials obey the Rodrigues type formula
\begin{equation}\label{formuladerodrigues}
\omega(x;a,\alpha,\beta,\gamma)P_{n,n}^{(\alpha,\beta,\gamma)}(x;a)=c_n(\alpha,\beta,\gamma)\dfrac{d^{n}}{dx^{n}}\left(\omega(x;a,\alpha+n,\beta+n,\gamma+n)\right),
\end{equation}
where $c_n(\alpha,\beta,\gamma)=(-1)^n\left[(\alpha+\beta+\gamma+2n+1)_n\right]^{-1}$  and $\omega(x;a,\alpha,\beta,\gamma)=(x-a)^{\alpha}x^{\beta}(1-x)^{\gamma}.$
Hence,
\begin{equation}
\label{2.2}
\begin{split}
\omega(x;a,\alpha,\beta,\gamma)P_{n,n}^{(\alpha,\beta,\gamma)}(x;a)=\dfrac{c_n(\alpha,\beta,\gamma)}{c_{n-1}(\alpha+1,\beta+1,\gamma+1)}\hspace{2cm}\\
\times\dfrac{d}{dx}\left[c_{n-1}(\alpha+1,\beta+1,\gamma+1)\dfrac{d^{n-1}}{dx^{n-1}}\left(\omega(x;a,\alpha+n,\beta+n,\gamma+n)\right)\right]\\
=c_n(\alpha,\beta,\gamma)\dfrac{d}{dx}\left[\dfrac{\omega(x;a,\alpha+1,\beta+1,\gamma+1)P_{n-1,n-1}^{(\alpha+1,\beta+1,\gamma+1)}(x;a)}{c_{n-1}(\alpha+1,\beta+1,\gamma+1)}\right].
\end{split}
\end{equation}
Let us define the auxiliary functions
\begin{equation}
\begin{split}
R_{n-k,n-k}^{(\alpha+k,\beta+k,\gamma+k)}(x;a):=\dfrac{\omega(x;a,\alpha+k,\beta+k,\gamma+k)P_{n-k,n-k}^{(\alpha+k,\beta+k,\gamma+k)}(x;a)}{c_{n-k}(\alpha+k,\beta+k,\gamma+k)},
\end{split}
\end{equation}
for $k=0,\ldots n-1.$

Denote the zeros of the diagonal Jacobi-Angelesco  polynomial  $P_{n,n}^{(\alpha,\beta,\gamma)}(x;a)$ by $x_{n,j}(a,\alpha,\beta,\gamma)$, ordered in an increasing way according with
the index  $j$, where $j=1,\ldots,2n.$ Using the Rodrigues formula \eqref{formuladerodrigues} we obtain
\begin{equation}
\begin{split}
\omega(x;a,\alpha,\beta,\gamma)P^{(\alpha,\beta,\gamma)}_{n,n}(x;a)&=\dfrac{c_{n}(\alpha,\beta,\gamma)}{c_{n-1}(\alpha+1,\beta+1,\gamma+1)}\\
&\times\left[\omega^{\prime}(x;a,\alpha+1,\beta+1,\gamma+1)P^{(\alpha+1,\beta+1,\gamma+1)}_{n-1,n-1}(x)\right.\\
&\ \left.+\omega(x;a,\alpha+1,\beta+1,\gamma+1)P^{(\alpha+1,\beta+1,\gamma+1)\prime}_{n-1,n-1}(x)\right],
\end{split}
\end{equation}
so that
\begin{equation}
\begin{split}
\dfrac{c_{n-1}(\alpha+1,\beta+1,\gamma+1)P^{(\alpha,\beta,\gamma)}_{n,n}(x;a)}{c_{n}(\alpha,\beta,\gamma)\omega(x;a,\alpha,\beta,\gamma)P^{(\alpha+1,\beta+1,\gamma+1)}_{n-1,n-1}(x;a)}&=\dfrac{\omega^{\prime}(x;a,\alpha+1,\beta+1,\gamma+1)}{\omega(x;a,\alpha+1,\beta+1,\gamma+1)}\\
&\ +\dfrac{P^{(\alpha+1,\beta+1,\gamma+1)\prime}_{n-1,n-1}(x;a)}{P^{(\alpha+1,\beta+1,\gamma+1)}_{n-1,n-1}(x;a)}.
\end{split}
\end{equation}
Therefore, $P^{(\alpha,\beta,\gamma)}_{n,n}(x;a)=0$ for some $x\in(a,0)\cup(0,1)$ if and only if
$$
\dfrac{P^{(\alpha+1,\beta+1,\gamma+1)\prime}_{n-1,n-1}(x;a)}{P^{(\alpha+1,\beta+1,\gamma+1)}_{n-1,n-1}(x;a)}+\dfrac{\omega^{\prime}(x;a,\alpha+1,\beta+1,\gamma+1)}{\omega(x;a,\alpha+1,\beta+1,\gamma+1)}=0,
$$
or equivalently,
\begin{equation}\label{eq_fundamental}
\begin{split}
\sum_{j=1}^{2(n-1)}\dfrac{1}{x-x_{n-1,j}(a,\alpha+1,\beta+1,\gamma+1)}
+\dfrac{\alpha+1}{x-a}+\dfrac{\beta+1}{x}-\dfrac{\gamma+1}{1-x}=0.
\end{split}
\end{equation}
Let us denote the left-hand side of \eqref{eq_fundamental} by $f_{n-1}(x;a,\alpha+1,\beta+1,\gamma+1).$
Observe that another way to write the function $ f_{n-1}(x;a,\alpha+1,\beta+1,\gamma+1)$ is
\begin{equation}
    f_{n-1}(x;a,\alpha+1,\beta+1,\gamma+1)=\dfrac{R_{n,n}^{(\alpha,\beta,\gamma)}(x;a)}{R_{n-1,n-1}^{(\alpha+1,\beta+1,\gamma+1)}(x;a)}.
\end{equation}
The zeros of $P^{(\alpha,\beta,\gamma)}_{n,n}(x)$ coincide with the zeros of $f_{n-1}(x;a,\alpha+1,\beta+1,\gamma+1)$ belonging to the set  $(a,0)\cup(0,1).$
For the sake of brevity, the intervals bounded by the zeros of $P^{(\alpha,\beta,\gamma)}_{n,n}(x)$ and the points $a,\ 0$ and $1,$ are denoted by $I_{n,j}(a,\alpha,\beta,\gamma),$ with $j=1,\ldots,2n+1.$
We prove a lemma concerning to the behaviour of the functions $f_{n-1}(x;a,\alpha+1,\beta+1,\gamma+1)$ in these intervals.
\begin{lemma}\label{Lema2}
For every $n\in \mathbb{N}$ the function $f_{n-1}(x;a,\alpha+1,\beta+1,\gamma+1)$ obeys the following properties:
\begin{itemize}
\item[$i)$] $f_{n-1}(x;a,\alpha+1,\beta+1,\gamma+1)$ is decreasing in the intervals $I_{n-1,j}(a,\alpha+1,\beta+1,\gamma+1)$, for $j=0,\ldots,2n-1;$
\item[$ii)$] The limit relations
$$\lim_{x\rightarrow x^{\pm}_{n-1,j}(a,\alpha+1,\beta+1,\gamma+1)}f_{n-1}(x;a,\alpha+1,\beta+1,\gamma+1)=\pm\infty,$$
hold for  $j=1,\ldots,2n-2.$ Moreover, the same limits hold at the points $a,\ 0$ and $1;$
\item[$iii)$] $f_{n-1}(x;a,\alpha+1,\beta+1,\gamma+1)$ possesses a unique zero in each interval $I_{n-1,j}(a,\alpha+1,\beta+1,\gamma+1)$, for $j=0,\ldots,2n-1$;
\item[$iv)$] Both the positive and the negative  zeros of  $P_{n-1,n-1}^{(\alpha+1,\beta+1,\gamma+1)}(x)$ and $P_{n,n}^{(\alpha,\beta,\gamma)}(x)$ interlace in the sense that
$$
x_{n,j}(\alpha,\beta,\gamma)<x_{n-1,j}(\alpha+1,\beta+1,\gamma+1)<x_{n,j+1}(\alpha,\beta,\gamma),\ \  j=1,\ldots,n-1
$$
and 
$$
x_{n,j}(\alpha,\beta,\gamma)<x_{n-1,j-1}(\alpha+1,\beta+1,\gamma+1)<x_{n,j+1}(\alpha,\beta,\gamma),\ \ j=n+1,\ldots,2n-1.
$$
\end{itemize}
\end{lemma}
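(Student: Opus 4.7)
The plan is to exploit the fact that $f_{n-1}(\cdot;a,\alpha+1,\beta+1,\gamma+1)$ is a rational function whose only singularities are simple poles at the $2(n-1)$ points $x_{n-1,j}(a,\alpha+1,\beta+1,\gamma+1)$ together with $a$, $0$ and $1$, and to prove the four claims sequentially from this structural description. For (i), I would differentiate term by term. Rewriting the last summand as $-\frac{\gamma+1}{1-x}=\frac{\gamma+1}{x-1}$, every term of $f_{n-1}$ has the form $\frac{c}{x-p}$ with positive residue $c$ (this is where the hypothesis $\alpha,\beta,\gamma>-1$ enters), so its derivative $-\frac{c}{(x-p)^2}$ is strictly negative wherever defined. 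Summing yields $f'_{n-1}(x)<0$ on every interval $I_{n-1,j}(a,\alpha+1,\beta+1,\gamma+1)$, which is (i).

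For (ii), I would compute one-sided limits at each pole. At a zero $x_{n-1,j}$ the dominant summand $\frac{1}{x-x_{n-1,j}}$ forces $f_{n-1}\to\pm\infty$ as $x\to x_{n-1,j}^\pm$, while the remaining terms stay bounded. At the outer endpoints, only one term is singular: $\frac{\alpha+1}{x-a}\to+\infty$ as $x\to a^+$, $\frac{\beta+1}{x}\to\mp\infty$ as $x\to 0^\mp$, and $-\frac{\gamma+1}{1-x}\to-\infty$ as $x\to 1^-$. Combining with (i), on every $I_{n-1,j}(a,\alpha+1,\beta+1,\gamma+1)$ the function $f_{n-1}$ is continuous and strictly decreases from $+\infty$ to $-\infty$, so by the intermediate value theorem it has a unique zero there, giving (iii).

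Part (iv) then follows by a counting argument. The intervals $I_{n-1,j}(a,\alpha+1,\beta+1,\gamma+1)$ in $(a,0)$ are precisely
\[
(a,x_{n-1,1}),\; (x_{n-1,1},x_{n-1,2}),\;\ldots,\;(x_{n-1,n-1},0),
\]
giving $n$ of them, and symmetrically $n$ intervals lie in $(0,1)$. By (iii) each contains exactly one zero of $f_{n-1}$, and by the discussion preceding the lemma these $2n$ zeros coincide with the zeros of $P_{n,n}^{(\alpha,\beta,\gamma)}$, which by Theorem \ref{loc_zeros_angelesco} account for all of them ($n$ in $(a,0)$ and $n$ in $(0,1)$). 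Matching $x_{n,j}$ to the $j$-th interval in $(a,0)$ for $j=1,\ldots,n$ yields $x_{n,j}<x_{n-1,j}<x_{n,j+1}$ for $j=1,\ldots,n-1$, and matching $x_{n,j}$ to the $(j-n)$-th interval in $(0,1)$ for $j=n+1,\ldots,2n$ gives $x_{n,j}<x_{n-1,j-1}<x_{n,j+1}$ for $j=n+1,\ldots,2n-1$.

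I do not expect a genuine obstacle: once the residue-sign observation in (i) is in hand, the rest is routine analysis of a decreasing function with prescribed poles. The only point requiring care is the bookkeeping of indices when passing from the interval picture to the interlacing inequalities, because the shift by one in the $(0,1)$ case arises from the extra interval $(0,x_{n-1,n})$ which contains $x_{n,n+1}$ and has no zero of $P_{n-1,n-1}^{(\alpha+1,\beta+1,\gamma+1)}$ on its left boundary.
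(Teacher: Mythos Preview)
Your proposal is correct and follows essentially the same approach as the paper: compute the derivative term by term to get strict negativity (i), read off the one-sided limits at the simple poles (ii), combine these via the intermediate value theorem for (iii), and deduce the interlacing (iv). Your treatment of (iv) is in fact more explicit than the paper's, which simply cites (iii) together with the Rodrigues-type identity to assert interlacing without spelling out the index bookkeeping you carefully provide.
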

\begin{proof}
For the proof of $i)$ observe that the derivative
\begin{equation*}
\begin{split}
\dfrac{\partial}{\partial x}\left[f_{n-1}(x;a,\alpha+1,\beta+1,\gamma+1)\right]&=-\sum_{j=1}^{2n}\dfrac{1}{(x-x_{n-1,j}(a,\alpha+1,\beta+1,\gamma+1))^{2}}\\
&\hspace{0.5cm} -\dfrac{\alpha+1}{(x-a)^2}-\dfrac{\beta+1}{x^2}-\dfrac{\gamma+1}{(1-x)^2},
\end{split}
\end{equation*}
is negative in the intervals $I_{n-1,j}(a,\alpha+1,\beta+1,\gamma+1),$ for every $j=1,\ldots 2n-1.$ Hence, $f_{n-1}(x,a,\alpha+1,\beta+1,\gamma+1)$ are decreasing in the intervals $I_{n-1,j}(a,\alpha+1,\beta+1,\gamma+1)$ for every $j=1,\ldots,2n+1$ and $n\geq 0$.

Item $ii)$ follows from
\begin{eqnarray*}
\lim_{x\rightarrow x^{\pm}_{n-1,j}(a,\alpha+1,\beta+1,\gamma+1)}f_{n-1}(x;a,\alpha+1,\beta+1,\gamma+1)\\
=\lim_{x\rightarrow x^{\pm}_{n-1,j}(a,\alpha+1,\beta+1,\gamma+1)}\dfrac{1}{x-x_{n-1,j}(a,\alpha+1,\beta+1,\gamma+1)},
\end{eqnarray*}
which readily implies
$$\lim_{x\rightarrow x^{\pm}_{n-1,j}(a,\alpha+1,\beta+1,\gamma+1)}f_{n-1}(x;a,\alpha+1,\beta+1,\gamma+1)=\pm\infty,$$
for $j=1,\ldots,2n-2$. Similarly, since $\alpha+1,\ \beta+1$ and $\gamma+1$ are positive, we obtain the same limit relations at the points $a,\ 0$ and $1$.

According to $i)$ and $ii)$ the function $f_{n-1}(x;a,\alpha+1,\beta+1,\gamma+1)$ takes all real values exactly once in each interval $I_{n-1,j}(a,\alpha+1,\beta+1,\gamma+1)$, $j=1,\ldots,2n-1$. The monotonicity yields  that there is a unique
$\overline{x}\in I_{n-1,j}(a,\alpha+1,\beta+1,\gamma+1)$ such that $$f_{n-1}(\overline{x};a,\alpha+1,\beta+1,\gamma+1)=0$$
and this proves $iii)$.

Finally we remark that the statement of item $iii)$, together with \eqref{2.2}, implies that both the positive and negative  zeros of  $P_{n-1,n-1}^{(\alpha+1,\beta+1,\gamma+1)}(x)$ and $P_{n,n}^{(\alpha,\beta,\gamma)}(x)$ interlace and $iv)$ holds.
\end{proof}

\section{Monotonic behaviour of the zeros of Jacobi-Angelesco, Jacobi-Laguerre and Laguerre-Hermite polynomials}

The problem about the monotonic behaviour of zeros of multiple orthogonal polynomials was formulated by Ismail \cite[Problem 24.1.5]{Mourad}.
The lack of a result like Markov's theorem (see \cite[Theorem 6.12.1]{Szego})  or other tools, such as Sturm's comparison theorem or the
Hellmann--Feynman theorem, that are commonly employed to study monotonicity of zeros of orthogonal polynomials makes Ismail's problem hard.

In this note we prove a specific result concerning to the monotonic behaviour of the zeros of Jacobi-Angelesco polynomials in the so-called diagonal case.
It is proved that all zeros are monotonically increasing functions of $\alpha$ and decreasing functions of $\gamma$. The monotonicity
with respect to $\beta$ is more peculiar: we believe that the $n$ zeros in $[a,0]$ decrease while the $n$ zeros in $[0,1]$ increase when $\beta$
increases. We are able to prove this fact in the particular case when $a=-1$ and $\alpha=\gamma$. The monotonic behaviour of Jacobi-Laguerre and Laguerre-Hermite polynomials will be obtained as a corollary.

\begin{theorem}\label{Main_theorem}
Let $n\in \mathbb{N}$ and $P^{(\alpha,\beta,\gamma)}_{n,n}(x),\ n\geq1$ be the corresponding diagonal  Jacobi-Angelesco polynomials. Then:
\begin{itemize}
\item[$a)$] all zeros of $P^{(\alpha,\beta,\gamma)}_{n,n}(x)$ are increasing with respect to the parameter $\alpha;$
 \item[$b)$] the negative zeros of  $P^{(\alpha,\beta,\gamma)}_{n,n}(x)$  are decreasing with respect to the parameter  $\beta$ and the positive zeros are increasing with respect to $\beta,$ provided $a=-1$ and $\alpha=\gamma;$
\item[$c)$] the zeros of $P^{(\alpha,\beta,\gamma)}_{n,n}(x)$ are decreasing with respect to the parameter $\gamma.$
\end{itemize}
\end{theorem}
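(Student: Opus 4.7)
The plan is to prove the three monotonicity statements simultaneously by induction on $n$, using the characterization of the zeros through equation \eqref{eq_fundamental} combined with the implicit function theorem. The crucial tool from Lemma \ref{Lema2}$(i)$ is that $\partial f_{n-1}/\partial x < 0$ at every zero $x_{n,j}(a,\alpha,\beta,\gamma)$, so implicit differentiation yields
$$
\frac{\partial x_{n,j}}{\partial \lambda} \;=\; -\,\frac{\partial f_{n-1}/\partial \lambda}{\partial f_{n-1}/\partial x}, \qquad \lambda \in \{\alpha,\beta,\gamma\},
$$
and the sign of $\partial x_{n,j}/\partial \lambda$ coincides with the sign of $\partial f_{n-1}/\partial \lambda$. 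Since $f_{n-1}$ depends on $\lambda$ both through the explicit coefficient and through the zeros $x_{n-1,i}(a,\alpha+1,\beta+1,\gamma+1)$ appearing in the sum, the inductive hypothesis supplies the signs of $\partial x_{n-1,i}/\partial \lambda$ needed to control the sum.

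The base case $n=1$ is immediate: the sum in \eqref{eq_fundamental} is empty, and
$$
\frac{\partial f_0}{\partial \alpha} = \frac{1}{x-a} > 0, \quad \frac{\partial f_0}{\partial \beta} = \frac{1}{x}, \quad \frac{\partial f_0}{\partial \gamma} = -\frac{1}{1-x} < 0
$$
on $(a,0)\cup(0,1)$, giving $(a)$, $(c)$, and $(b)$. For the inductive step of $(a)$,
$$
\frac{\partial f_{n-1}}{\partial \alpha} \;=\; \sum_{i=1}^{2(n-1)}\frac{\partial x_{n-1,i}/\partial \alpha}{(x-x_{n-1,i})^{2}} \;+\; \frac{1}{x-a}
$$
is strictly positive: $x>a$, and each $\partial x_{n-1,i}/\partial \alpha > 0$ by the inductive hypothesis applied to $P_{n-1,n-1}^{(\alpha+1,\beta+1,\gamma+1)}$. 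The inductive step for $(c)$ is identical with all signs reversed, using $-1/(1-x) < 0$ and $\partial x_{n-1,i}/\partial \gamma < 0$.

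The main obstacle is $(b)$. Here
$$
\frac{\partial f_{n-1}}{\partial \beta} \;=\; \sum_{i=1}^{2(n-1)}\frac{\partial x_{n-1,i}/\partial \beta}{(x-x_{n-1,i})^{2}} \;+\; \frac{1}{x},
$$
and by the inductive hypothesis $\partial x_{n-1,i}/\partial \beta$ has the sign of $x_{n-1,i}$, so the summands carry opposite signs and an a priori cancellation occurs. This is exactly where the hypotheses $a=-1$ and $\alpha = \gamma$ become essential: the combined weight on $[-1,1]$ is $(1-x^{2})^{\alpha}|x|^{\beta}$, which is even, so $P_{n-1,n-1}^{(\alpha+1,\beta+1,\alpha+1)}(\,\cdot\,;-1)$ is an even polynomial and its zeros satisfy $x_{n-1,2n-1-i} = -x_{n-1,i}$, whence $\partial x_{n-1,2n-1-i}/\partial \beta = -\partial x_{n-1,i}/\partial \beta$. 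Indexing so that $x_{n-1,i} > 0$, each pair $\{i,2n-1-i\}$ contributes
$$
\bigl(\partial x_{n-1,i}/\partial \beta\bigr)\left[\frac{1}{(x-x_{n-1,i})^{2}}-\frac{1}{(x+x_{n-1,i})^{2}}\right] \;=\; \bigl(\partial x_{n-1,i}/\partial \beta\bigr)\cdot\frac{4\,x\,x_{n-1,i}}{(x^{2}-x_{n-1,i}^{2})^{2}},
$$
which has the sign of $x$ because $\partial x_{n-1,i}/\partial \beta > 0$ and $x_{n-1,i} > 0$. Combined with the $1/x$ term, $\partial f_{n-1}/\partial \beta$ has the sign of $x$, so the implicit function formula gives $\partial x_{n,j}/\partial \beta$ the sign of $x_{n,j}$. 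The induction closes because the shifted parameters $(\alpha+1,\beta+1,\alpha+1)$ retain the symmetry condition $\alpha' = \gamma'$, so the hypothesis remains available at the previous level.
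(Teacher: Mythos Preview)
Your proof is correct and follows essentially the same inductive strategy as the paper: both arguments use the characterization of the zeros of $P_{n,n}^{(\alpha,\beta,\gamma)}$ as the zeros of $f_{n-1}$, the strict monotonicity $\partial f_{n-1}/\partial x<0$ from Lemma~\ref{Lema2}$(i)$, the inductive hypothesis on the zeros of $P_{n-1,n-1}^{(\alpha+1,\beta+1,\gamma+1)}$, and for $(b)$ the even symmetry available when $a=-1$ and $\alpha=\gamma$ to pair the positive and negative zeros. The only difference is one of packaging: you differentiate and read off the sign of $\partial f_{n-1}/\partial\lambda$ via the implicit function theorem, whereas the paper works with a small finite increment $\delta>0$ and the sign of $h_{n-1}(x;\lambda,\delta)=f_{n-1}(x;\lambda)-f_{n-1}(x;\lambda+\delta)$; your formulation avoids the bookkeeping of choosing $\delta$ small enough to preserve the interlacing, but the underlying comparison is identical.
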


\begin{proof}
When it is clear from the context, we skip the parameters in some of the expressions, especially if they are considered to be fixed. For instance, in $a)$ we are interested in the behaviour of the zeros with respect to the parameter $\alpha$, so that $P_{n,n}^{(\alpha,\beta,\gamma)}(x;a)$ will be abbreviated to $P_{n,n}^{(\alpha)}(x).$

We prove only item $a)$ and in item $b)$ we shall deal only with the monotonic behaviour of the negative zeros. The arguments in the other cases are quite similar.

First we establish $a)$ by induction with respect to $n$. For this purpose we define the functions
\begin{equation}\label{definicaoh}
 h_{k}(x;\alpha+n-k,\delta):=f_{k}(x;\alpha+n-k)-f_{k}(x;\alpha+\delta+n-k)
\end{equation}
for $0\leq k\leq n-1,\ n\geq 1$ and $\delta>0.$

For $n=1$ the zeros of $P_{1,1}^{(\alpha)}(x)$ coincide with the zeros of $f_0(x;\alpha+1).$
Since
$$h_{0}(x;\alpha+1,\delta)=-\dfrac{\delta}{x-a},$$
we obtain $h_{0}(x;\alpha+1,\delta)<0,$ for every $x>a,$ and then
$$f_{0}(x_{1,j}(\alpha);\alpha+\delta+1)>0,\quad j=1\text{ or }j=2.$$
Using item $iii)$ of Lemma \ref{Lema2} we conclude that there are no zeros of $f_0(x;\alpha+\delta+1)$  and consequently of $P^{(\alpha+\delta)}_{1,1}(x)$ in the intervals $\left(a,x_{1,1}(\alpha)\right]$ or $\left(0,x_{1,2}(\alpha)\right],$ because $f_0(x;\alpha+\delta+1)$ doesn't change him sign in these intervals. Thus
$$
x_{n,j}(\alpha)<x_{n,j}(\alpha+\delta),\ \ \mathrm{for}\ \  j=1,2.
$$
Therefore, the zeros of  $P^{(\alpha)}_{1,1}(x)$ are increasing functions of the parameter $\alpha.$

Suppose that the zeros of  $P_{n-1,n-1}^{(\alpha+1)}(x)$ are increasing with respect to $\alpha,$ for  $n \geq 2$  arbitrarily fixed, namely that
$$x_{n-1,j}(\alpha+\delta+1)=x_{n-1,j}(\alpha+1)+\xi_{n-1,j}(\delta),$$
for $j=1,\ldots,2n-2$ with $\xi_{n-1,j}(\delta)>0,$ for every $\delta>0.$

We shall analyze how the fact that $\alpha$ increase affects the zeros of $P^{(\alpha}_{n,n}(x),$ having in mind that under the induction hypothesis that the zeros of $P_{n-1,n-1}^{(\alpha+1)}$ increase with $\alpha.$
In order to this we verify the sign of the function  $h_{n-1}(x;\alpha+1,\delta)$ in the intervals
\begin{equation}
\begin{matrix}
(a,x_{n-1,j}(\alpha+1));\\
(x_{n-1,j}(\alpha+\delta+1),x_{n-1,j+1}(\alpha+1)),\ j=1,\ldots,n-2;\\
(x_{n-1,n-1}(\alpha+\delta+1),0);\\
(0,x_{n-1,n}(\alpha+1));\\
(x_{n-1,j}(\alpha+\delta+1),x_{n-1,j+1}(\alpha+1)),\ j=n,\ldots,2n-3;\\
(x_{n-1,2n-2}(\alpha+\delta+1),1).
\end{matrix}
\end{equation}

Since the positive and the negative zeros of $P_{n-1,n-1}^{(\alpha+1)}(x)$ and $P_{n,n}^{(\alpha)}(x)$  are interlacing, the continuity of the zeros with respect to the parameter   $\alpha$ allows us to choose  $\delta>0$ small enough such that
\begin{equation}
x_{n-1,j}(\alpha+1)<x_{n-1,j}(\alpha+\delta+1)<x_{n,j+1}(\alpha+1)
\end{equation}
for $j=1,\ldots,n-1$ and
\begin{equation}
x_{n-1,j}(\alpha+1)<x_{n-1,j}(\alpha+\delta+1)<x_{n,j+2}(\alpha+1)
\end{equation}
for $j=n,\ldots,2n-2.$
Since
$$x_{n,i}(\alpha)-x_{n-1,j}(\alpha+1)>x_{n,i}(\alpha)-x
_{n-1,j}(\alpha+1)-\xi_{n-1,j}(\delta
) $$
for $j=1,\ldots,2n-2$ and $i=1,\ldots,2n,$ we obtain
\begin{equation}
\begin{split}
\sum_{j=1}^{2n-2}\dfrac{1}{x_{n,i}(\alpha)-x_{n-1,j}(\alpha+1)}<\sum_{j=1}^{2n}\dfrac{1}{x_{n,i}(\alpha)-x
_{n-1,j}(\alpha+\delta+1)},
\end{split}
\end{equation}
so that
\begin{equation}
    h_{n-1}(x_{n,i}(\alpha);\alpha+1,\delta)<0,
\end{equation}
for $i=1,\ldots,2n.$
Hence,
$$f_{n-1}(x_{n,i}(\alpha);\alpha+\delta+1)>f_{n-1}(x_{n,i}(\alpha);\alpha+1),
$$
for $i=1,\ldots,2n.$ Since $f_{n-1}(x_{n,i}(\alpha);\alpha+1)=0$ for $i=1,\ldots,2n,$ we conclude that
$$f_{n}(x_{n,i}(\alpha);\alpha+\delta+1)>0,
$$
for $i=1,\ldots,2n.$

Apply again $iii)$ of the Lemma \ref{Lema2} to conclude that there are no zeros of $P^{(\alpha+\delta)}_{n,n}(x)$ in the intervals
$$\left(a,x_{n,1}(\alpha)\right],$$
$$\left(x_{n-1,j}(\alpha+1),x_{n,j+1}(\alpha)\right],\ \ j=1,\ldots,n-1,$$
$$\left(0,x_{n,n+1}(\alpha)\right],$$
and
$$\left(x_{n-1,j}(\alpha+1),x_{n,j+2}(\alpha)\right],\ j=n,\ldots,2n-2$$
because $f_{n-1}(x;\alpha+\delta+1)$ does not change sign in these intervals.

Therefore,  the zeros of  $P^{(\alpha+\delta)}_{n,n}(x)$ must belong to the intervals
$$x_{n,j+1}(\alpha+\delta)\in\left(x_{n,j+1}(\alpha),x_{n-1,j+1}(\alpha+1)\right),\ j=0,\ldots,n-2,$$
$$x_{n,n}(\alpha+\delta)\in\left(x_{n,n}(\alpha),0\right),$$
$$x_{n,j+2}(\alpha+\delta)\in\left(x_{n,j+2}(\alpha),x_{n-1,j+1}(\alpha+1)\right),\ j=n-1,\ldots,2n-3$$
and
$$x_{n,2n}(\alpha+\delta)\in\left(x_{n,2n}(\alpha),1\right).$$
In other words, the zeros of $P^{(\alpha+\delta)}_{n,n}(x)$ are located ``to the right'' with respect to the corresponding zeros of $P^{(\alpha)}_{n,n}(x)$.

We prove $b)$ only for the zeros that belong to the interval $(-1,0)$ because the proof concerning those in $(0,1)$ is similar for reasons of symmetry.
Indeed, when $\alpha=\gamma=\lambda-1/2$ the Rodrigues formula \eqref{formuladerodrigues} yields
\begin{equation*}
    P_{n,n}^{(\lambda-1/2,\beta,\lambda-1/2)}(x)=\sum_{i=0}^{n}\binom{n}{i}(-\beta-n)_{i}\kappa_{n-i}(\lambda)C^{(\lambda+i)}_{n-i}(x)x^{n-i},
\end{equation*}
where $C^{(\lambda)}_n(x)$  are the classical Gegenbauer polynomials and $$\kappa_n(\lambda)=\dfrac{(-1)^n\left(2\lambda\right)_n}{2^{n}\left(\lambda+1/2\right)_n n!}.$$
Since $x^{n-i}C^{(\lambda+i)}_{n-i}(x)$ are even polynomials, we conclude that $P_{n,n}^{(\lambda-1/2,\beta,\lambda-1/2)}(x)$ are even polynomials too. Hence, in this case the zeros of
$P_{n,n}^{(\lambda-1/2,\beta,\lambda-1/2)}(x)$ are symmetric with respect to the origin.

The proof of  $b)$  also goes by induction with respect to $n$. Let us define
\begin{equation}\label{definicaohdelta}
 h_{k}(x;\beta+n-k,\delta):=f_{k}(x;\beta+n-k)-f_{k}(x;\beta+\delta+n-k)
\end{equation}
for $0\leq k\leq n-1,\ n\geq 1$ and $\delta>0.$

For $n=1$  the negative zeros of $P_{1,1}^{(\beta)}(x)$ are the zeros of $f_0(x;\beta+1)$. Since
$$h_{0}(x;\beta+1,\delta)=-\dfrac{\delta}{x}$$
then $h_{0}(x;\beta+1,\delta)>0,$ for every $-1<x<0,$ so that
$$f_{0}(x_{1,1}(\beta);\beta+\delta+1)<0.$$
Using $iii)$ of Lemma \ref{Lema2}, we conclude that $f_0(x;\beta+\delta+1)$, and consequently $P^{(\beta+\delta)}_{1,1}(x)$, does not vanish in the interval $\left[x_{1,1}(\alpha),0\right],$ because these functions do not change sign there.
Therefore,
$$x_{1,1}(\beta)>x_{1,1}(\beta+\delta).$$
This is equivalent to the fact that the negative zero of  $P^{(\beta)}_{1,1}(x)$ decreases when the parameter $\beta$ increases.

Suppose that the negative zeros of $P_{n-1,n-1}^{(\beta+1)}(x)$ are decreasing with respect to the parameter $\beta,$ for some $n \geq 2$  arbitrarily  fixed, namely that
$$x_{n-1,j}(\beta+\delta+1)=x_{n-1,j}(\beta+1)-\xi_{n-1,j}(\delta),$$
for $j=1,\ldots,n-1$ with $\xi_{n-1,j}(\delta)>0,$ for $\delta>0.$
As a consequence of the symmetry of the zeros, we have $x_{n-1,j}(\beta+\delta+1)=-x_{n-1,2n-2-j}(\beta+\delta+1),$
so if we take $\delta>0$ small enough, such that
\begin{equation}
x_{n,j}(\beta+1)<x_{n-1,j}(\beta+\delta+1)<x_{n-1,j}(\beta+1),
\end{equation}
for $j=1,\ldots,n-1,$ we obtain
\begin{equation}\label{hbeta}
\begin{split}
    h_{n-1}(x;\beta+1,\delta)=\sum_{j=1}^{n-1}2x\left[\dfrac{1}{x^2-x_{n-1,j}^2(\beta+1)}-\dfrac{1}{x^2-x_{n-1,j}^2(\beta+\delta+1)}\right]
    -\dfrac{\delta}{x}.
\end{split}
\end{equation}
Every term of the latter sum vanishes at the origin and is positive for $x\in(-1,x_{n-1,j}(\beta+\delta+1))\cup(x_{n-1,j}(\beta+1),0).$
This implies that
\begin{equation}
    h_{n-1}(x_{n,i}(\beta);\beta+1,\delta)>0,\ \ i=1,\ldots,n.
\end{equation}
Hence,
$$
f_{n-1}(x_{n,i}(\beta);\beta+\delta+1)<f_{n-1}(x_{n,i}(\beta);\beta+1),\ i=1,\ldots,n.
$$
The fact that $f_{n-1}(x_{n,i}(\beta);\beta+1)=0,\ i=1,\ldots,n,$ implies
$$
f_{n-1}(x_{n,i}(\beta);\beta+\delta+1)<0,\ i=1,\ldots,n.
$$

Again, like in the first step, using $iii)$ of Lemma \ref{Lema2}, we conclude that $P^{(\beta+\delta)}_{n,n}(x)$ does not posses zeros in the intervals
$$\left(x_{n,j}(\beta),x_{n-1,j}(\beta+1)\right],\ j=1,\ldots,n-1,$$
and
$$\left(x_{n,n}(\beta),0\right]$$
because there are no sign changes of there.

Therefore, the negative zeros of $P^{(\beta+\delta)}_{n,n}(x)$ belong to the intervals
$$x_{n,1}(\beta+\delta)\in\left(-1,x_{n,1}(\beta)\right),$$
and
$$x_{n,j+1}(\beta+\delta)\in\left(x_{n-1,j}(\beta+1),x_{n,j+1}(\beta)\right),\ j=1,\ldots,n-1.$$
In other words, the zeros of $P^{(\beta+\delta)}_{n,n}(x)$ are all located ``to the left''  with respect to the zeros of $P^{(\beta)}_{n,n}(x).$  This completes the prove of $b).$
\end{proof}

It is worth mentioning that an alternative way to prove the monotonicity of the zeros of $P^{(\alpha,\beta,\gamma)}_{n,n}(x)$ with respect to $\alpha$ and $\gamma$ is to employ 
a modification of a classical result of Markov \cite{Mar} (see also \cite[Lemma 2.7.1]{Riv} and \cite[Lemma 1]{DKD}) which states that all the zeros of the derivative of a 
polynomial with real zeros are monotonic functions of each zero of the polynomial itself.

As direct consequences of Theorem \ref{Main_theorem}, \eqref{laguerre-angelesco} and \eqref{hermite-angelesco} we obtain:
\begin{cor}
The zeros of Jacobi-Laguerre polynomials $L_{n,n}^{(\alpha,\beta)}(x,a)$ are increasing functions of the parameter $\alpha.$
\end{cor}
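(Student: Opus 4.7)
The plan is to deduce the corollary from Theorem~\ref{Main_theorem}(a) by means of the asymptotic relation \eqref{laguerre-angelesco}. For each fixed $\gamma > 0$ I would consider the scaled diagonal Jacobi-Angelesco polynomial $P_{n,n}^{(\alpha,\beta,\gamma)}(x; a/\gamma)$, whose $2n$ zeros $x_{n,j}(a/\gamma, \alpha, \beta, \gamma)$ are, by Theorem~\ref{Main_theorem}(a), strictly increasing functions of $\alpha$. Since multiplication by the positive constant $\gamma$ preserves order, the rescaled quantities $\gamma\, x_{n,j}(a/\gamma, \alpha, \beta, \gamma)$ are also strictly increasing in $\alpha$ for every fixed $\gamma$.

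Next, I would invoke \eqref{laguerre-angelesco}, which presents $L_{n,n}^{(\alpha,\beta)}(x,a)$ as the limit of $\gamma^{2n} P_{n,n}^{(\alpha,\beta,\gamma)}(x/\gamma; a/\gamma)$ as $\gamma \to \infty$. By Hurwitz's theorem, applied to these polynomials of fixed degree converging uniformly on compact subsets of $(a,0)\cup(0,+\infty)$, the rescaled zeros $\gamma\, x_{n,j}(a/\gamma, \alpha, \beta, \gamma)$ converge, with matching indexing, to the zeros $y_{n,j}(a,\alpha,\beta)$ of $L_{n,n}^{(\alpha,\beta)}(x,a)$. Passing to the limit in the strict inequality $\gamma\, x_{n,j}(a/\gamma, \alpha_1, \beta, \gamma) < \gamma\, x_{n,j}(a/\gamma, \alpha_2, \beta, \gamma)$, valid for $\alpha_1 < \alpha_2$, immediately yields the non-strict monotonicity $y_{n,j}(a, \alpha_1, \beta) \le y_{n,j}(a, \alpha_2, \beta)$.

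The main obstacle is promoting this non-strict monotonicity to a strict one, since pointwise limits of strictly increasing functions need not be strictly increasing. To handle this I would exploit the fact that the coefficients of $L_{n,n}^{(\alpha,\beta)}(x,a)$ depend real-analytically on $\alpha$ and that its zeros are simple by the Angelesco localization Theorem~\ref{loc_zeros_angelesco}; consequently each $y_{n,j}(a,\alpha,\beta)$ is real-analytic in $\alpha$, and a non-decreasing real-analytic function on an interval is either strictly increasing or constant, with constancy excluded by examining the $\alpha\to\infty$ asymptotics. As a cleaner alternative, the corollary can be established directly by mimicking the inductive argument of Theorem~\ref{Main_theorem}(a) in the Jacobi-Laguerre setting: these polynomials admit a Rodrigues-type formula analogous to \eqref{formuladerodrigues}, and the interlacing and sign analysis of an auxiliary function $f_{n-1}$ carries over almost verbatim, with the term $-(\gamma+1)/(1-x)$ replaced by the constant $-1$, yielding strict monotonicity in $\alpha$ without any limit argument.
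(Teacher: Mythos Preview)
Your primary approach---pass to the limit in \eqref{laguerre-angelesco} using Theorem~\ref{Main_theorem}(a) and Hurwitz---is precisely the route the paper takes: it simply records the corollary as a ``direct consequence'' of those two inputs without further detail. You have in fact been more careful than the paper by isolating the non-strict-to-strict issue and proposing two valid remedies; either the real-analyticity argument or the direct inductive proof (with $-(\gamma+1)/(1-x)$ replaced by the constant $-1$ in the auxiliary function $f_{n-1}$) works, and the paper itself does not address this subtlety at all.
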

and
\begin{cor}
The negative zeros of Laguerre-Hermite polynomials $H_{n,n}^{(\beta)}(x)$ are decreasing functions and its positive zeros are increasing functions of the parameter $\beta.$
\end{cor}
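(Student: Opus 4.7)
The plan is to derive the corollary directly from part $b)$ of Theorem~\ref{Main_theorem} together with the asymptotic relation \eqref{hermite-angelesco}. The crucial observation is that the Jacobi-Angelesco polynomials appearing on the right-hand side of
\[
H_{n,n}^{(\beta)}(x)=\lim_{\alpha\to\infty}\sqrt{\alpha}^{\,2n}\,P_{n,n}^{(\alpha,\beta,\alpha)}(x/\sqrt{\alpha};-1)
\]
lie exactly in the symmetric diagonal case $a=-1$, $\alpha=\gamma$ to which Theorem~\ref{Main_theorem}$b)$ applies, so the desired monotonicity in $\beta$ is available for every polynomial in the approximating family.

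First I would denote by $\xi_{n,j}(\alpha,\beta)$, $j=1,\ldots,2n$, the increasingly ordered zeros of $P_{n,n}^{(\alpha,\beta,\alpha)}(x;-1)$. Theorem~\ref{Main_theorem}$b)$ asserts that for every fixed $\alpha$ the negative zeros $\xi_{n,1},\ldots,\xi_{n,n}$ decrease in $\beta$ while the positive zeros $\xi_{n,n+1},\ldots,\xi_{n,2n}$ increase in $\beta$. Since by Theorem~\ref{loc_zeros_angelesco} the polynomial $H_{n,n}^{(\beta)}$ has exactly $n$ simple zeros in $(-\infty,0)$ and $n$ in $(0,\infty)$, and the convergence in \eqref{hermite-angelesco} is locally uniform in $x$, a standard Hurwitz-type argument identifies the $j$-th zero of $H_{n,n}^{(\beta)}$ as $y_{n,j}(\beta)=\lim_{\alpha\to\infty}\sqrt{\alpha}\,\xi_{n,j}(\alpha,\beta)$. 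Because $\sqrt{\alpha}>0$, the rescaling preserves the direction of the $\beta$-monotonicity, and this monotonicity is inherited by the pointwise limit $y_{n,j}(\beta)$, yielding the claim.

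The only step that needs some care is that a pointwise limit of strictly monotone functions is a priori only monotone in the weak sense. To obtain the strict version I would either repeat the sign analysis of the auxiliary function $h_{n-1}$ from the proof of Theorem~\ref{Main_theorem}$b)$ after the change of variable $x\mapsto x/\sqrt{\alpha}$ and take the limit $\alpha\to\infty$ directly, thereby producing an analogue of $h_{n-1}$ for $H_{n,n}^{(\beta)}$ itself whose sign enforces strict monotonicity, or else invoke the analyticity of simple zeros of $H_{n,n}^{(\beta)}$ in $\beta$ (via the implicit function theorem) to rule out plateaus. This is the only substantive obstacle; everything else is a routine transfer through the asymptotic relation \eqref{hermite-angelesco}.
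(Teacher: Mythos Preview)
Your approach is exactly the one the paper uses: the corollary is stated as an immediate consequence of Theorem~\ref{Main_theorem}$b)$ together with the asymptotic relation~\eqref{hermite-angelesco}, with no further argument given. Your discussion of the Hurwitz step and of strict versus weak monotonicity actually supplies more detail than the paper itself.
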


\bibliographystyle{amsplain}

\end{document}